\newtheorem{theorem}{Theorem}[section]
\newtheorem{proposition}[theorem]{Proposition}
\newtheorem{corollary}[theorem]{Corollary}
\theoremstyle{definition}
\theoremstyle{remark}
\numberwithin{equation}{section}
\newcommand{\N}{\mbox{$\mathbb{N}$}}
\newcommand{\Z}{\mbox{$\mathbb{Z}$}}
\begin{document}
\setcounter{page}{1}

\title[Graded quasi-Baer $\ast$-ring characterization of Steinberg algebras]{\textbf{Graded quasi-Baer $\ast$-ring characterization of Steinberg algebras}}

\author[M. Ahmadi, A. Moussavi]{Morteza Ahmadi, Ahmad Moussavi$^{*}$}
\address{Department of Pure Mathematics, Faculty of Mathematical Sciences\\
Tarbiat Modares University, P.O.Box:14115-134, Tehran, Iran}
\email{\textcolor[rgb]{0.00,0.00,0.84}{mortezy.ahmadi@modares.ac.ir\,\
		morteza.ahmadi23@gmail.com}}
\email{\textcolor[rgb]{0.00,0.00,0.84}{moussavi.a@modares.ac.ir\,\
 moussavi.a@gmail.com}}
\thanks{*Corrosponding authour: moussavi.a@gmail.com}
 \subjclass{16D25, 16W10, 46K05, 16S10} \keywords{Steinberg algebra, Leavitt path algebra, graded quasi-Baer $ \ast $-ring, quasi-Baer $ \ast $-ring, graded ideal}

\begin{abstract}
Given a graded ample, Hausdorff groupoid $\mathcal{G}$, and an involutive field $K$, we consider the Steinberg algebra $A_K(\mathcal{G})$. 
We obtain necessary and sufficient conditions on $\mathcal{G}$ under which the annihilator of any graded ideal of $A_K(\mathcal{G})$ is generated by a homogeneous projection.  This property is called graded quasi-Baer $\ast$.
We use the Steinberg algebra model to characterize graded quasi-Baer $\ast$ Leavitt path algebras. 
\end{abstract}

\maketitle
\section{Introduction}
Steinberg algebras were independently introduced in \cite{Clark2014} and \cite{Steinberg2010}   and have attracted the attention of analysts and algebraists since then. Steinberg algebras appeared in the details of many groupoid C*-algebra constructions before they were specified by name (see, e.g., \cite{Exel2008, Kumijan1997}). In addition to providing insight into the analytic theory of groupoid C*-algebras, these algebras also gave rise to interesting examples of $\ast$-algebras. For instance, all Leavitt path algebras, Kumjian-Pask algebras, and discrete inverse semigroup algebras can be realized as Steinberg algebras. Furthermore, Steinberg algebras have been useful for the transfer of algebraic and analytic concepts and techniques.

In \cite{Clark2018} the authors characterized the graded ideals of Steinberg algebras over groupoids equipped~with~a~cocycle into a discrete group such that the inverse image of the identity doesn't have too much isotropy. 

In this paper, we study the annihilators of graded ideals in Steinberg algebras built from graded
groupoids. 
 We show that the annihilator of any graded ideal of $A_K(\mathcal{G})$ is generated by a homogeneous projection if and only if for each open invariant subset $U$ of the unit space $\mathcal{G}^{(0)}$, $U$ or the interior of $\mathcal{G}^{(0)}\setminus U$ is compact.
 This property (i.e.,  the annihilator of any graded ideal is generated by a homogeneous projection) is called graded quasi-Baer $\ast$. In \cite{ma2, dozd}, the authors characterized
graded quasi-Baer $\ast$ Leavitt path algebras. 
We give another characterization of graded quasi-Baer $\ast$ Leavitt path algebras by using the Steinberg algebra model.

\section{Quasi-Baer $\ast$ condition for graded unital $\ast$-rings}
 \subsection{Quasi-Baer $\ast$-rings}
For a subset $X$ of a ring $R$, 
 the right annihilator  $r_R (X)$ of $X$ in $R$ denotes the
set $\{r\in R\mid xr = 0\text{~ for ~all~}x \in  X\}$. 
It is straightforward to check that $r_R (X)$ is a right of $R$.


A ring $R$ is said to be a $\ast$-ring or an involutive ring, if it has an involution (i.e., an operation $\ast : R \rightarrow R$
such that $(x+ y)^{\ast} = x^{\ast} + y^{\ast}$, $(xy)^{\ast}= y^{\ast}x^{\ast}$, and $(x^{\ast})^{\ast}= x$ for all $x, y \in R$). 
A $\ast$-ring $R$ is
said to be a \textit{quasi-Baer $\ast$-ring} if $r_R (I)$ is generated by a projection for any ideal  $I$ of $R$.  
This condition is  left-right
symmetric. 
If a $\ast$-ring is quasi-Baer $\ast$, the projection which generates the right annihilator of zero is an identity. Consequently, quasi-Baer $\ast$-rings are necessarily unital.
 It turns out that the  projection in the definition of a quasi-Baer $\ast$-ring is central.
 
 Recall that an involution $\ast$ on a $\ast$-ring $R$ is said to be \textit{proper} if
$xx^{\ast}=0$ implies  $x= 0$
for any element $x \in R$. 
Also, an involution $ \ast $ is called a \textit{semiproper} involution if $xRx^{\ast} = 0$ implies $x = 0$. 
 Obviously, if $\ast$ is a proper involution, then $\ast$ is a semiproper involution. The converse
does not hold true (see \cite[Example 10.2.9]{Birkenmeier3}). 
It was shown in \cite[Lemma 10.2.10]{Birkenmeier3} that the involution on a quasi-Baer $\ast$-ring is always semiproper.

\subsection{Graded $\ast$-rings}
If $  \Gamma$ is an abelian group with identity $\varepsilon$, a ring $R$ is a $  \Gamma$-\textit{graded ring} if $R =\bigoplus_{g\in \Gamma}R_{g}$
such that each $ R_{g} $ is an additive
subgroup of $R$ and $R_{g} R_{h} \subseteq R_{gh}$ for all $g,h \in \Gamma$. The elements of $R^h =
\bigcup_{g\in \Gamma}R_{g}$
are the \textit{homogeneous elements} of $R$.   If $a \in R_{g}$
and $a \not= 0$, we say that $g $ is the degree of $a$.
 Note that  every nonzero homogeneous idempotent has degree $\varepsilon$.
If $R$ is an algebra over a field $K$, then $R$ is a \textit{graded algebra} if $R$ is a graded ring and $R_{g}$ is
a $K$-vector subspace for any $g\in \Gamma$.
A $ \Gamma $-graded ring $R$ with an involution $\ast $ is said to be a \textit{graded $\ast$-ring} if $R_{g}^{\ast}\subseteq R_{g^{-1}}$ for every
$g\in \Gamma$.

A \textit{graded right  ideal} of $R$ is a right  ideal $I$ such that $I =\bigoplus_{g\in\Gamma} I\cap R_{g}$. An  ideal $I$ of $R$ is a graded
 ideal if and only if $I$ is generated by homogeneous elements. This property implies that $r_R (X)$  is a
graded ideal of $R$ for any set $X$ of homogeneous elements of $R$. 
 
 
\subsection{Graded quasi-Baer $\ast$-rings}
In \cite{ma2},  the definition of quasi-Baer $\ast$-rings is adapted to graded $\ast$-rings. 
Recall that
a graded $\ast$-ring $R$ is called a \textit{graded quasi-Baer $\ast$-ring}
if the right  annihilator of any graded ideal of $R$ is generated by a homogeneous
projection.
It is useful to note that the homogeneous projection in the definition of a graded quasi-Baer $\ast$-ring is central (see \cite[Remark 2]{ma2}).

Recall that an involution $\ast$ on a graded $\ast$-ring $R$ is \textit{graded proper}, if $xx^*=0$ implies $x =0$ for any homogeneous element $x\in R$. Also, $\ast$  is called \textit{graded semiproper}, if $xRx ^* = 0$ implies $x = 0$ for any homogeneous element $x\in R$.
By \cite[Proposition 4]{ma2}, the involution on a graded quasi-Baer $\ast$-ring $R$ is always graded semiproper.
\section{Graded quasi-Baer $\ast$ condition for Steinberg algebras}
\subsection{Graded groupoids}
A \textit{groupoid} is a small category in which every morphism is invertible. It can
also be viewed as a generalisation of a group which has partial binary operation. Let $\mathcal{G}$  be a groupoid. The \textit{unit space} of $\mathcal{G}$ is the set
\[\mathcal{G}^{(0)}=\{\gamma\gamma^{-1}\mid \gamma\in \mathcal{G}\}= \{\gamma^{-1}\gamma\mid \gamma\in \mathcal{G}\}.\]
Groupoid \textit{source} and \textit{range} maps $s,r: \mathcal{G} \rightarrow\mathcal{G}^{(0)}$ are defined such that $s(\gamma) = \gamma^{-1}\gamma$ and $r(\gamma)=\gamma\gamma^{-1}$.
Elements of $\mathcal{G}^{(0)}$ are units in the sense that $\gamma s(\gamma)=\gamma$ and $r(\gamma)\gamma=\gamma$ for all $\gamma\in \mathcal{G}$.
For each $u\in \mathcal{G}^{(0)}$, the set $\mathcal{G}_u^u=s^{-1}(u)\cap r^{-1}(u)$ is a
group, called the \textit{isotropy group} based at $u$.
 The \textit{isotropy group bundle} of $\mathcal{G}$ is the set
\[ \mathrm{Iso}(\mathcal{G}) =\bigcup_{u\in \mathcal{G}^{(0)}}\mathcal{G}_u^u=\{\gamma\in \mathcal{G}\mid s(\gamma)=r(\gamma)\}.\]
  A subset $U \subseteq \mathcal{G}^{(0)}$ is \textit{invariant} if $s(\gamma) \in U$ implies $r(\gamma) \in U$. Equivalently, $U$ is invariant if $r(\gamma) \in U$ implies $s(\gamma) \in U$. Given such an invariant subset $U$, let
\[\mathcal{G}|_U=\{\gamma\in \mathcal{G}\mid s(\gamma)\in U\}.\]
Observe that $\mathcal{G}|_U$  is a subgroupoid of $  \mathcal{G}$. If, in addition, $U$ is an open subset of $\mathcal{G}^{(0)}  $, then $\mathcal{G}|_U$ is open in $\mathcal{G}$.

The  set of \textit{composable pairs} of  $\mathcal{G}$ is $\mathcal{G}^{(2)} = \{(\gamma,\alpha) \in \mathcal{G} \times \mathcal{G} | s(\gamma) = r(\alpha)\}$. For $U, V \subseteq \mathcal{G}$, we define
\[UV =\{\gamma\alpha \mid \gamma\in U, \alpha\in V, (\gamma,\alpha)\in \mathcal{G}^{(2)}\}.\]

A \textit{topological groupoid} is a groupoid endowed with a topology under which the inverse map is continuous, and such that composition is continuous with respect to the relative topology on $\mathcal{G}^{(2)}$ inherited from $\mathcal{G} \times \mathcal{G}$.
 An \textit{open bisection} of $\mathcal{G}$ is an open subset $U\subseteq \mathcal{G}$ such that $s|_U$ and $r|_U$ are homeomorphisms onto an open subset of $\mathcal{G}^{(0)}$.
An \textit{étale} groupoid is a topological groupoid
$\mathcal{G}$ such that its range map is a local homeomorphism from $\mathcal{G}$ to $\mathcal{G}^{(0)}$ (the source map will consequently share that property). It is easy to see that
the topology of an étale groupoid admits a basis formed by open bisections.
In an étale groupoid one has that $\mathcal{G}^{(0)}$ is open in $\mathcal{G}$. If, in addition, $\mathcal{G}$ is Hausdorff, then $\mathcal{G}^{(0)}$ is also closed in $\mathcal{G}$.
We say that an étale groupoid $\mathcal{G}$ is \textit{ample} if there is a basis
consisting of compact open bisections for its topology.

An ample hausdorff groupoid $\mathcal{G}$ is called  \textit{effective}  if $\textrm{Int(Iso}(\mathcal{G}))$, the interior of $\mathrm{Iso}(\mathcal{G})$ in the relative topology, is equal to  $\mathcal{G}^{(0)}$.
We  say that $\mathcal{G}$ is \textit{strongly effective} if $\mathcal{G}|_U$ is effective
for every closed invariant subset $U\subseteq \mathcal{G}^{(0)}$. If $\mathcal{G}$ is strongly effective, then it is effective because $\mathcal{G}^{(0)}$ is a closed invariant set.

Let $\Gamma$ be a discrete group with identity $\varepsilon$, and $\mathcal{G}$ a topological groupoid. A \textit{$\Gamma$-grading} of $\mathcal{G}$ is a continuous map $c: \mathcal{G}\rightarrow \Gamma$ such that $c(\gamma\alpha)= c(\gamma)c(\alpha)$ for all $(\gamma,\alpha)\in  \mathcal{G}^{(2)}$; such a map $c$ is called a \textit{cocycle} on $\mathcal{G}$. We always have $\mathcal{G}^{(0)}\subseteq c^{-1}(\varepsilon)$. Observe that 
for $g\in \Gamma$, $\textrm{Iso}(c^{-1}(g))=c^{-1}(g)\cap \textrm{Iso}(\mathcal{G})$.
We write $B^{co}_g(\mathcal{G})$
 for the collection of all  compact open bisections of $c^{-1}(g)$ and $B^{co}_*(\mathcal{G})=\bigcup_{g\in \Gamma}B^{co}_g(\mathcal{G})$.
Throughout this paper we only consider $\Gamma$-graded ample Hausdorff groupoids.

\subsection{Steinberg algebras}
 We recall the notion of the Steinberg algebra as a universal algebra generated by certain compact open subsets of an ample Hausdorff groupoid.
Let $ \mathcal{G} $ be a $\Gamma$-graded ample Hausdorff groupoid and $R$ be
a commutative ring with identity. The \textit{Steinberg $R$-algebra}
associated to $ \mathcal{G} $, denoted $A_R(\mathcal{G})$, is the algebra generated by the set 
$\{t_B\mid B\in B_*^{co}(\mathcal{G})\}$
 with coefficients in $R$, subject to
 \begin{enumerate}
 \item{$(R_1)$} $t_{\emptyset}=0$;
 \item{$(R_2)$} $t_{B_1}t_{B_2}=t_{B_1B_2}$ for all $B_1,B_2\in B_*^{co}(\mathcal{G})$; and
 \item{$(R_3)$} $t_{B_1}+t_{B_2}=t_{B_1\cup B_2}$, whenever $B_1$ and $B_2$ are disjoint elements of $B^{co}_g$
 for some $g\in \Gamma$ such that $B_1\cup B_2$ is a bisection.
 \end{enumerate}
 The Steinberg algebra defined above is isomorphic to the following construction:
\[A_R(\mathcal{G})= span\{1_U\mid U \text{~is a compact open bisection of~} \mathcal{G}\},\]
where $1_U: \mathcal{G}\rightarrow R$ denotes the characteristic function on $U$ (see \cite[Theorem 3.10]{Clark2014}). Equivalently, if we give $R$ the discrete topology, then continuous functions from $\mathcal{G}$ to $R$ are exactly locally constant functions from $ \mathcal{G} $ to $R$, and so $A_R(\mathcal{G})=C_c(\mathcal{G},R)$, the space of compactly supported continuous functions from $ \mathcal{G} $ to $R$. Addition is
point-wise and multiplication is given by convolution
$(f*g)(\gamma)=\sum_{\alpha\beta=\gamma}f(\alpha)g(\beta)$.
It is useful to note that
$1_U * 1_V=1_{UV}$ for compact open bisections $U$ and $V$.
By \cite[Lemma 3.5]{Clark2014}, every element $f \in A_R(\mathcal{G})$ can be expressed as $f=\sum_{U\in F}a_U1_U$, where $F$ is a finite subset of mutually disjoint elements of $ B_*^{co}(\mathcal{G}) $.

The family of all idempotent elements of $A_R(\mathcal{G}^{(0)})$ is a set of local units for $A_R(\mathcal{G})$. Moreover, $A_R(\mathcal{G})$ is unital if and only if $\mathcal{G}^{(0)}$ is compact. In this case, $1_{\mathcal{G}^{(0)}}$ is the identity element of $A_R(\mathcal{G})$.

If  $c: \mathcal{G}\rightarrow \Gamma$ is a cocycle, 
then the Steinberg algebra $A_R(\mathcal{G})$  is a $\Gamma$-graded algebra with homogeneous components
\[A_R(\mathcal{G})_g=\{f\in A_R(\mathcal{G}) \mid f(\gamma) \not= 0\Rightarrow c(\gamma)=g\}.\]
If $^-:R\rightarrow R$ is an involution on $R$, then the map 
\[*:A_R(\mathcal{G})\rightarrow A_R(\mathcal{G}),\hspace*{2cm}f\mapsto f^*, ~where ~f^*(\gamma)=\overline{f(\gamma^{-1})}\]
defines an involution on $A_R(\mathcal{G})$ making it into a $\ast$-algebra.
Observe that $(A_R(\mathcal{G})_g)^*=A_R(\mathcal{G})_{g^{-1}}$, for each $g\in \Gamma$.
It follows that $A_R(\mathcal{G})$ is a $\Gamma$-graded $\ast$-algebra.

A function $f \in A_R(\mathcal{G})$ is a \textit{class function} if $f$ satisfies the following conditions:
\begin{enumerate}
\item $f(x) \not= 0\Rightarrow s(x) = r(x)$;
\item $ s(x) = r(x) = s(z) \Rightarrow f(zxz^{-1}) = f(x) $.
\end{enumerate}
By \cite[Proposition 4.13]{Steinberg2010} the center of $ A_R(\mathcal{G})$ is the set of class functions.

The following result  is a characterization of Steinberg algebras that have
graded proper involutions.
\begin{proposition}\label{pro11}
Let $R$ be a commutative unital $\ast$-ring, 
$\mathcal{G}$ be an ample Hausdorff groupoid, $\Gamma$ be a discrete group, and $c :\mathcal{G}\rightarrow \Gamma$ be a cocycle such that $c^{-1}(\varepsilon)$ is effective.
Then the following are equivalent.
\begin{enumerate}
\item\label{pro11_1} The involution on $R$ is proper;
\item\label{pro11_2} The involution on $A_R(\mathcal{G})$ is graded proper;
\item\label{pro11_3} The involution on $A_R(\mathcal{G})$ is graded semiproper.
\end{enumerate}
In particular, if $K$ is a field with involution, then the involution on $A_K(\mathcal{G})$ is graded proper.
\end{proposition}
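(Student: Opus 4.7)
The plan is to close the cycle (ii)$\Rightarrow$(iii)$\Rightarrow$(i)$\Rightarrow$(ii); the first two implications are formal and use only that $A_R(\mathcal{G})$ has local units $\{1_V\}$, while the effectivity of $c^{-1}(\varepsilon)$ is essential in the last one.

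For (ii)$\Rightarrow$(iii), given a homogeneous $f$ with $fA_R(\mathcal{G})f^{*}=0$, I choose a compact open $V\subseteq\mathcal{G}^{(0)}$ containing $r(\mathrm{supp}(f^{*}))$, so that $1_{V}f^{*}=f^{*}$; associativity then gives $ff^{*}=f\cdot 1_{V}\cdot f^{*}\in fA_R(\mathcal{G})f^{*}=0$, and graded propriety forces $f=0$. For (iii)$\Rightarrow$(i), given $r\in R$ with $rr^{*}=0$, I pick any nonempty compact open $V\subseteq\mathcal{G}^{(0)}$ and set $f=r\cdot 1_{V}\in A_R(\mathcal{G})_{\varepsilon}$; since $f^{*}=r^{*}\cdot 1_{V}$ and $R$ is commutative, one finds $fhf^{*}=rr^{*}\cdot(1_{V}h1_{V})=0$ for every $h\in A_R(\mathcal{G})$, so $fA_R(\mathcal{G})f^{*}=0$, graded semipropriety yields $f=0$, and hence $r=0$.

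The substantive step is (i)$\Rightarrow$(ii). Suppose $R$'s involution is proper and let $f\in A_R(\mathcal{G})_{g}$ satisfy $ff^{*}=0$. I write $f=\sum_{i}a_{i}1_{U_{i}}$ with pairwise disjoint $U_{i}\in B^{co}_{g}(\mathcal{G})$, and expand
\[
ff^{*}=\sum_{i,j}a_{i}\overline{a_{j}}\,1_{U_{i}U_{j}^{-1}},
\]
where each $U_{i}U_{j}^{-1}$ is a compact open bisection in $c^{-1}(\varepsilon)$ satisfying $U_{i}U_{i}^{-1}=r(U_{i})\subseteq\mathcal{G}^{(0)}$ and, for $i\neq j$, $U_{i}U_{j}^{-1}\cap\mathcal{G}^{(0)}=\emptyset$ (since $U_{i}\cap U_{j}=\emptyset$). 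I then invoke the effectivity of $c^{-1}(\varepsilon)$ in the equivalent form that any open subset of $\mathrm{Iso}(c^{-1}(\varepsilon))$ must already lie in $\mathcal{G}^{(0)}$: for $i\neq j$, the off-diagonal bisection $U_{i}U_{j}^{-1}$, being disjoint from $\mathcal{G}^{(0)}$, cannot be contained in $\mathrm{Iso}(c^{-1}(\varepsilon))$ and therefore contains elements with distinct source and range. I use such elements to localize $f$ by left/right multiplication with appropriate characteristic functions $1_{V}$ on $\mathcal{G}^{(0)}$, reducing the identity $ff^{*}=0$ to a single-term equation of the form $a_{i_{0}}\overline{a_{i_{0}}}\cdot 1_{V}=0$ on a nonempty compact open $V$; propriety of $R$ then forces $a_{i_{0}}=0$. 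The principal obstacle is precisely this localization step when the ranges $r(U_{i})$ genuinely overlap, so that a naive pointwise evaluation of $ff^{*}$ at a unit only produces $\sum_{i:\,u\in r(U_{i})}a_{i}\overline{a_{i}}=0$ at each $u$; the effectivity hypothesis is what prevents the off-diagonal bisections from being trapped inside $\mathrm{Iso}(c^{-1}(\varepsilon))$ and thereby allowing phantom cancellations. The ``in particular'' statement is then immediate, since on any field $K$, $aa^{*}=0$ with $a\neq 0$ would force $a^{*}=a^{-1}(aa^{*})=0$ and hence $a=0$, so that propriety is automatic and (ii) follows from the equivalence.
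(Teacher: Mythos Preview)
Your cycle (ii)$\Rightarrow$(iii)$\Rightarrow$(i) is done exactly as in the paper, and your overall plan for (i)$\Rightarrow$(ii) --- write $f=\sum a_i 1_{U_i}$ with disjoint $U_i\in B^{co}_g(\mathcal{G})$, expand $ff^*$, observe that $U_iU_j^{-1}\cap\mathcal{G}^{(0)}=\emptyset$ for $i\neq j$, and use effectivity of $c^{-1}(\varepsilon)$ to localize --- is the paper's strategy as well.

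The gap is in the localization step. From effectivity you only extract that each off-diagonal $U_iU_j^{-1}$ ``contains elements with distinct source and range'', and you then propose to ``localize $f$ by left/right multiplication'' with some $1_V$. Two problems. First, right multiplication does not preserve the hypothesis: $(f1_V)(f1_V)^*=f1_Vf^*$, which need not vanish just because $ff^*=0$; only left multiplication is safe, since $(1_Vf)(1_Vf)^*=1_V(ff^*)1_V=0$. Second, with left multiplication by $1_V\subseteq\mathcal{G}^{(0)}$ you get $1_Vf=\sum a_i 1_{VU_i}$ with $r(VU_i)=V\cap r(U_i)$, so shrinking $V$ inside an overlap $r(U_i)\cap r(U_j)$ does nothing to separate ranges; the single witness $\gamma$ with $s(\gamma)\neq r(\gamma)$ is not enough to produce the ``single-term equation'' you want.

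What the paper actually uses here is \cite[Lemma~3.1]{Brown2014}: for any compact open bisection $B\subseteq c^{-1}(\varepsilon)\setminus\mathcal{G}^{(0)}$ one can find a nonempty compact open $V\subseteq\mathcal{G}^{(0)}$ with $VBV=\emptyset$. Applying this inductively to the off-diagonal bisections and replacing $f$ by $1_Vf$ at each step, the paper reduces to the situation where the ranges $r(U_1),\ldots,r(U_n)$ are pairwise disjoint. Once that is achieved, restricting $ff^*$ to $\mathcal{G}^{(0)}$ gives $\sum_i a_i\overline{a_i}\,1_{r(U_i)}=0$ with the $r(U_i)$ disjoint, so each $a_i\overline{a_i}=0$ and propriety of $R$ finishes. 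Your sketch should invoke this lemma (or prove its content) rather than the weaker statement that some $\gamma$ has $s(\gamma)\neq r(\gamma)$.
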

\begin{proof}
\ref{pro11_1}$\Rightarrow$\ref{pro11_2}
 Assume on the contrary that there exists  a nonzero homogeneous element $f\in A_R(\mathcal{G})_{g}$
such that $ff^*=0$. 
We can express $f$  as $f=\sum_{i=1}^nr_{i}1_{U_i}$,  where $r_1,\ldots,r_n\in R\setminus\{0\}$  and $U_1,\ldots,U_n \in B^{co}_*(\mathcal{G})$ are mutually disjoint.
Since the $U_i$'s are disjoint and the $r_i$'s are nonzero, we can assume each $U_i\subseteq \mathcal{G}_{g}$. Without loss of generality, we can assume that  $r(U_1),\ldots,r(U_n) $ are mutually disjoint compact open subsets of $\mathcal{G}^{(0)}$. For this, let $W=r(U_i)\cap r(U_j)\not=\emptyset$, for some $i\not=j$. Then $W$ is a nonempty compact open subset of $\mathcal{G}^{(0)}$. Let $x\in W$, then there exist $\alpha\in U_i$ and $\beta\in U_j$ such that $x=r(\alpha)=r(\beta)$. So $\alpha=\beta(\beta^{-1}\alpha)$, and that $\beta^{-1}\alpha\in U_j^{-1}U_i$.  It is easy to see that $U_j^{-1}U_i$ does not intercept $\mathcal{G}^{(0)}$. Then
$U_j^{-1}U_i\subseteq c^{-1}(\varepsilon)\setminus \mathcal{G}^{(0)}$ is a nonempty compact open bisection. Since $ c^{-1}(\varepsilon) $ is effective,  \cite[Lemma 3.1]{Brown2014} implies that there exists a nonempty open
subset $V \subseteq W$ such that $V(U_j^{-1}U_i)V = \emptyset$. 
By shrinking if necessary, we can assume $V$ is compact.
Then $U_jV$ and $U_iV$ are mutually disjoint compact open bisection, and
$(U_jV)^{-1}(U_iV)= \emptyset$. This implies $r(U_jV)\cap r(U_iV)=\emptyset$. If
we define  $f'=1_V*f$, then $f'\not=0$, $f'f'^*=0$, and  $f'$  can be expressed  as $f'=\sum_{i=1}^nr_{i}1_{U'_i}$,  where $r_1,\ldots,r_n\in R\setminus\{0\}$, $U'_1,\ldots,U'_n \in B^{co}_*(\mathcal{G})$ are mutually disjoint, and $r(U'_1),\ldots,r(U'_n) \subseteq \mathcal{G}^{(0)}$ are mutually disjoint as desired. 

Next, we have
\[ff^*=\sum_{i,j=1}^nr_i\overline{r_j}(1_{U_i}*1_{U_j^{-1}})=\sum_{i,j=1}^n r_{i}\overline{r_{j}}1_{U_iU_j^{-1}}=0.\]
 For each $i$ if $j\not=i$, then $U_iU_j^{-1}$ does not intercept $\mathcal{G}^{(0)}$.
Thus 
\begin{align*}
 ff^*|_{\mathcal{G}^{(0)}}&=\sum_{i,j=1}^n r_i\overline{r_j}(1_{U_iU_j^{-1}})|_{\mathcal{G}^{(0)}}=\sum_{i,j=1}^n r_i\overline{r_j}1_{U_iU_j^{-1}\cap\mathcal{G}^{(0)}}\\&=\sum_{i=1}^n r_i\overline{r_i}1_{U_iU_i^{-1}}=\sum_{i=1}^n r_i\overline{r_i}1_{r(U_i)}=0.
 \end{align*}
Note that any collection of characteristic functions of mutually disjoint open compact subsets of $\mathcal{G}^{(0)}$ is linearly independent. 
Then $ r_i\overline{r_i}=0$ for each $i$. Thus, \ref{pro11_1} yields $r_1,\ldots,r_n=0$
and that $f=0$, a contradiction. Hence the involution on $A_R(\mathcal{G})$ is
graded proper.

 \ref{pro11_2}$ \Rightarrow $\ref{pro11_3}  This implication holds in any $\ast$-ring with local units.

\ref{pro11_3}$ \Rightarrow $\ref{pro11_1}  Assume that $a\overline{a}=0 $, for $a\in R$. Then for a nonempty open compact subset $U\subseteq \mathcal{G}^{(0)}$ we have $(a1_U)A_R(\mathcal{G})(a1_U)^* =(a1_U)A_R(\mathcal{G})(\overline{a}1_U)=(a\overline{a}1_U)A_R(\mathcal{G})(1_U)=0$.
Thus, \ref{pro11_3} yields that $a1_U=0$. Hence $a=0$ as desired.
\end{proof}
An argument similar to that used in the proof of Proposition \ref{pro11} can be used to prove the following.
\begin{proposition}\label{}
Let $R$ be a $\ast$-ring, and  $\mathcal{G}$ be an affective ample Hausdorff groupoid.
Then the following are equivalent.
\begin{enumerate}
\item\label{pro11_1} The involution on $R$ is proper;
\item\label{pro11_2} The involution on $A_R(\mathcal{G})$ is  proper;
\item\label{pro11_3} The involution on $A_R(\mathcal{G})$ is  semiproper.
\end{enumerate}
In particular, if $K$ is a field with involution, then the involution on $A_K(\mathcal{G})$ is  proper.
\end{proposition}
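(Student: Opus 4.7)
The plan is to mirror the proof of Proposition \ref{pro11} with the grading removed: replace ``$c^{-1}(\varepsilon)$ is effective'' by ``$\mathcal{G}$ is effective'', and drop every restriction that the compact open bisections entering a normal form of an element lie in a single $c^{-1}(g)$. All three implications then carry over with only cosmetic changes.

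For (i)$\Rightarrow$(ii), assume on the contrary that some nonzero $f\in A_R(\mathcal{G})$ satisfies $ff^*=0$, and write $f=\sum_{i=1}^n r_i 1_{U_i}$ with $r_i\neq 0$ and $U_1,\dots,U_n\in B^{co}_*(\mathcal{G})$ mutually disjoint. The shrinking argument from Proposition \ref{pro11} goes through verbatim: if $r(U_i)\cap r(U_j)\neq\emptyset$ for some $i\neq j$, then $U_j^{-1}U_i$ is a nonempty compact open bisection disjoint from $\mathcal{G}^{(0)}$ (disjointness of $U_i$ and $U_j$ forbids it from meeting the unit space), so effectiveness of $\mathcal{G}$ together with \cite[Lemma 3.1]{Brown2014} supplies a compact open $V\subseteq r(U_i)\cap r(U_j)$ with $V(U_j^{-1}U_i)V=\emptyset$, and replacing $f$ by $1_V*f$ reduces the overlap while preserving $ff^*=0$ and nonzeroness. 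After finitely many such reductions the $r(U_i)$ are mutually disjoint. Restricting $ff^*=0$ to $\mathcal{G}^{(0)}$ yields $\sum_i r_i\overline{r_i}\,1_{r(U_i)}=0$; linear independence of characteristic functions of disjoint compact open subsets of $\mathcal{G}^{(0)}$ forces $r_i\overline{r_i}=0$, and properness of $R$ gives $r_i=0$, a contradiction.

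The implication (ii)$\Rightarrow$(iii) is routine in any $\ast$-ring with self-adjoint local units: given $f\in A_R(\mathcal{G})$, the function $e=1_W$ for any compact open $W\subseteq \mathcal{G}^{(0)}$ containing $r(\mathrm{supp}\,f)\cup s(\mathrm{supp}\,f)$ is a self-adjoint local unit with $ef=fe=f$ and $ef^*=f^*e=f^*$, so $fA_R(\mathcal{G})f^*=0$ forces $fef^*=ff^*=0$, and properness yields $f=0$. Finally, for (iii)$\Rightarrow$(i), if $a\overline{a}=0$ with $a\in R$ and $U\subseteq \mathcal{G}^{(0)}$ is any nonempty compact open set, commutativity of $R$ gives $(a1_U)*A_R(\mathcal{G})*(a1_U)^*=(a\overline{a})\bigl(1_U*A_R(\mathcal{G})*1_U\bigr)=0$; semiproperness produces $a1_U=0$ and hence $a=0$.

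The only substantive point to check is that \cite[Lemma 3.1]{Brown2014} is stated for an arbitrary effective ample Hausdorff groupoid rather than only for the $\varepsilon$-fibre of a graded one, since in Proposition \ref{pro11} it was applied inside $c^{-1}(\varepsilon)$. This is essentially the content of the cited lemma, so no real obstacle is expected; the rest is a bookkeeping adaptation of the previous proof.
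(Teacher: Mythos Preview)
Your proposal is correct and follows exactly the route the paper intends: the paper's own proof is simply the sentence ``An argument similar to that used in the proof of Proposition~\ref{pro11} can be used,'' and you have carried out precisely that analogy, removing the grading and replacing effectiveness of $c^{-1}(\varepsilon)$ by effectiveness of $\mathcal{G}$. The one point you single out---that \cite[Lemma~3.1]{Brown2014} is stated for an arbitrary effective ample Hausdorff groupoid and not merely for the $\varepsilon$-fibre---is indeed the only thing to verify, and it holds.
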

In the next result, we  characterize the graded quasi-Baer $\ast$  Steinberg algebras
over ample Hausdorff groupoids equipped with a cocycle taking values in a discrete group.

\begin{theorem}\label{1}
Let $K$ be a field with involution, $\mathcal{G}$ be an ample Hausdorff groupoid with compact unit space, $\Gamma$ be a discrete group, and $c :\mathcal{G}\rightarrow \Gamma$ be a cocycle such that $c^{-1}(\varepsilon)$ is strongly effective. 
 Then $A_K(\mathcal{G})$ is a graded quasi-Baer $\ast$-ring if and only if for every open invariant subset $U$ of $\mathcal{G}^{(0)}$, $U$ or $\mathrm{Int}(\mathcal{G}^{(0)}\setminus U)$ is compact.
\end{theorem}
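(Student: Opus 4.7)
The plan is to use the correspondence (available by \cite{Clark2018} because $c^{-1}(\varepsilon)$ is effective) between graded two-sided ideals of $A_K(\mathcal{G})$ and open invariant subsets of $\mathcal{G}^{(0)}$: each open invariant $U$ corresponds to the graded ideal
\[J(U) := \mathrm{span}_K\{1_V : V \in B_*^{co}(\mathcal{G}),\ V\subseteq \mathcal{G}|_U\} = \{f\in A_K(\mathcal{G}) : \mathrm{supp}(f)\subseteq \mathcal{G}|_U\},\]
and every graded ideal is of this form. The first step is to compute the annihilator of $J(U)$: using that $s(\mathcal{G}|_U)=U$ together with $1_V*1_W = 1_{VW}$ for compact open bisections, one checks that an element $f = \sum_{i=1}^n r_i 1_{V_i}$ with the $V_i$ mutually disjoint annihilates $J(U)$ exactly when each $r(V_i)$ is disjoint from $U$, equivalently (since $r(V_i)$ is open) contained in $\mathrm{Int}(\mathcal{G}^{(0)}\setminus U)$. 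Writing $W := \mathrm{Int}(\mathcal{G}^{(0)}\setminus U)$, which is itself open and invariant by a bisection-chase using the invariance of $\mathcal{G}^{(0)}\setminus U$, I get $r_{A_K(\mathcal{G})}(J(U)) = J(W)$.

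For the ($\Leftarrow$) direction, suppose the compactness dichotomy holds and let $U$ be an open invariant subset. If $W$ is compact, then $1_W \in A_K(\mathcal{G})_\varepsilon$ is a homogeneous projection that generates $J(W) = r_{A_K(\mathcal{G})}(J(U))$. If instead $U$ is compact, then $U$ is clopen in the compact totally disconnected space $\mathcal{G}^{(0)}$, so $W = \mathcal{G}^{(0)}\setminus U$ is also clopen and hence compact, reducing to the previous case. Since every graded ideal has the form $J(U)$, this proves $A_K(\mathcal{G})$ is graded quasi-Baer $\ast$.

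For the ($\Rightarrow$) direction, assume $A_K(\mathcal{G})$ is graded quasi-Baer $\ast$ and fix an open invariant $U$. Let $p$ be a homogeneous projection generating $r_{A_K(\mathcal{G})}(J(U)) = J(W)$; the key step is to identify $p = 1_V$ for some compact open invariant $V \subseteq \mathcal{G}^{(0)}$. Being a nonzero idempotent, $p$ has degree $\varepsilon$; being central (by \cite[Remark 2]{ma2}), $p$ is a class function by \cite[Proposition 4.13]{Steinberg2010}, so $\mathrm{supp}(p) \subseteq \mathrm{Iso}(c^{-1}(\varepsilon))$. Since $c^{-1}(\varepsilon)$ is effective, $\mathrm{Int}(\mathrm{Iso}(c^{-1}(\varepsilon))) = \mathcal{G}^{(0)}$, and because $\mathrm{supp}(p)$ is open this yields $\mathrm{supp}(p) \subseteq \mathcal{G}^{(0)}$. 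Expressing $p = \sum_{i} a_i 1_{V_i}$ with the $V_i$ mutually disjoint compact open subsets of $\mathcal{G}^{(0)}$, the identity $p^2 = p$ forces each $a_i$ to be a nonzero idempotent of the field $K$, whence $a_i = 1$. Hence $p = 1_V$ with $V = \bigsqcup_i V_i$ compact open, and $V$ is invariant by the class-function property. From $J(V) = pA_K(\mathcal{G}) = J(W)$ and bijectivity of the ideal--subset correspondence, $V = W$, so $W$ is compact and the dichotomy holds for $U$. (The case $p=0$ forces $W=\emptyset$, which is trivially compact.)

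The main obstacle is the identification of the central homogeneous projection $p$ as the characteristic function of a compact open invariant subset of $\mathcal{G}^{(0)}$: this uses the effectivity hypothesis crucially to confine $\mathrm{supp}(p)$ to $\mathcal{G}^{(0)}$ and uses that $K$ is a field to force the coefficients to be $1$.
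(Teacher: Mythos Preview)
Your proof is correct and follows a more streamlined route than the paper's. Both arguments rest on the bijection between graded ideals and open invariant subsets from \cite[Theorem~5.3]{Clark2018}, but they differ in organization. You first establish the general identity $r_{A_K(\mathcal{G})}(J(U)) = J(W)$ with $W=\mathrm{Int}(\mathcal{G}^{(0)}\setminus U)$, whereas the paper treats the two cases ($U$ compact, $W$ compact) separately in the forward direction and never isolates this formula. Your observation that $U$ compact forces $U$ clopen in the compact Hausdorff space $\mathcal{G}^{(0)}$, hence $W=\mathcal{G}^{(0)}\setminus U$ compact, collapses the two cases into one and in fact shows that the stated dichotomy is equivalent to the simpler condition ``$\mathrm{Int}(\mathcal{G}^{(0)}\setminus U)$ is compact for every open invariant $U$''. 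In the converse direction the paper writes the central projection as $p=\sum k_i 1_{U_i}$, invokes \cite[Lemma~1.4]{Clark2018} to see each $U_i$ is compact open invariant, and then runs a contradiction argument to rule out both $U$ and $W$ being noncompact; you instead use effectivity of $c^{-1}(\varepsilon)$ to confine $\mathrm{supp}(p)$ to $\mathcal{G}^{(0)}$, the field hypothesis to force all coefficients equal to $1$ so that $p=1_V$, and then bijectivity of the correspondence $U\mapsto J(U)$ to conclude $V=W$ directly. Your argument is shorter and makes the role of each hypothesis (effectivity, $K$ a field, compact unit space) more transparent; the paper's is more hands-on and avoids the support analysis via $\mathrm{Int}(\mathrm{Iso}(c^{-1}(\varepsilon)))$, at the price of a longer contradiction step.
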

\begin{proof}
Let $I$ be a graded ideal of $A_K(\mathcal{G})$. Then by \cite[Theorem 5.3]{Clark2018}, there exists an open invariant subset $U$ of $\mathcal{G}^{(0)}$ such that $I=A_K(\mathcal{G}|_U)$. 
First, assume that $U$ is compact. Then by \cite[Lemma 1.6]{Clark2019}, $1_U$ is a class function in $A_K(\mathcal{G})$, and by \cite[Proposition~4.13]{Steinberg2010}, $1_U$ is in the center of $A_K(\mathcal{G})$. Thus $1_U$ is a central projection. We claim that
$I=A_K(\mathcal{G})1_U$.
 Let $B$ be a compact open bisection in $\mathcal{G}$.   
 Then 
 $BU=B$ if $s(B)\subseteq U$, and $BU= \emptyset$ otherwise.
 So  $1_B*1_U=1_{BU}\in A_K(\mathcal{G}|_U)=I$.
Since $A_K(\mathcal{G})$ is spanned by the elements of the form $1_B$, with $B$ as above, we conclude that $A_K(\mathcal{G})1_U\subseteq I$. Let us now prove that $I\subseteq A_K(\mathcal{G})1_U$. Again we may focus on characteristic
functions, meaning that all we must do is show that $1_B$ lies in $A_K(\mathcal{G})1_U$, for all compact open bisections $B\subseteq \mathcal{G}|_U$. Given such a $B$, observe that
$s(B)\subseteq U$, then  \[1_B=1_{Bs(B)}=1_{BU}=1_B*1_U\in A_K(\mathcal{G})1_U,\]
This prove that $I= A_K(\mathcal{G})1_U$. It is straightforward to see that
$r_{A_K(\mathcal{G})}(A_K(\mathcal{G})1_U)=(1_{\mathcal{G}^{(0)}}-1_U)A_K(\mathcal{G})$. Thus $r_{A_K(\mathcal{G})}(I)$ is generated by  $1_{\mathcal{G}^{(0)}}-1_U$.

Now, assume that $V=\mathrm{Int}(\mathcal{G}^{(0)}\setminus U)$ is compact.
Observe that $V$ is also invariant. Indeed, $r(s^{-1}(V))$ is an open subset of $\mathcal{G}^{(0)}$, since $V$ is an open subset of $\mathcal{G}^{(0)}$. Notice that $\mathcal{G}^{(0)}\setminus U$ is invariant, since $U$ is invariant. Since $\mathcal{G}^{(0)}\setminus U$ is invariant, $r(s^{-1}(V))\subseteq \mathcal{G}^{(0)}\setminus U$. Thus $r(s^{-1}(V))\subseteq V$, and that $V$ is invariant. Hence $1_V$ is a central (homogeneous) projection. We claim that the right annihilator of $I$ is generated by $1_V$.
Let $1_B\in I$, for some compact open bisection $B\subseteq \mathcal{G}|_U$. Then $s(B)\subseteq U$, and so $s(B)\cap V\subseteq s(B) \cap(\mathcal{G}^{(0)}\setminus U)=\emptyset$. Thus $BV=\emptyset$, and that $1_B*1_V=1_{BV}=0$. Hence
$1_VA_K(\mathcal{G})\subseteq r_{A_K(\mathcal{G})}(I)$. Now, assume that $1_B\in r_{A_K(\mathcal{G})}(I)$, for some compact open bisection $B\subseteq \mathcal{G}$. 
Then for each $1_W\in I$, $1_W1_B=1_{WB}=0$. Since $s(W)\subseteq U$, we have $r(B)\subseteq \mathcal{G}^{(0)}\setminus U$. Since $r(B)$ is open,  $r(B)\subseteq V$. Then 
$1_B=1_{Br(B)}=1_{BV}=1_B1_V=1_V1_B\in 1_VA_K(\mathcal{G})$. This show that $r_{A_K(\mathcal{G})}(I)\subseteq  1_VA_K(\mathcal{G})$. Then  $r_{A_K(\mathcal{G})}(I)$ is generated by $1_V$ as claimed. 

Conversely,  assume that $A_K(\mathcal{G})$ is a graded quasi-Baer $\ast$-ring. Let $U$ be an open invariant subset of $\mathcal{G}^{(0)}$. Take $I=A_K(\mathcal{G}|_U)$. By \cite[Theorem 5.3]{Clark2019} $I$ is a graded ideal of $A_K(\mathcal{G})$. Then there exists a central homogeneous  projection $p\in A_K(\mathcal{G})$ such that the right annihilator of $I$ is generated by $p$. First, assume that $p=0$. If there is a compact open set
$V\subseteq  \mathrm{Int}(\mathcal{G}^{(0)}\setminus U)$, then one can show that $1_V\in r_{A_K(\mathcal{G})}(I)$, a contradiction. Thus $ \mathrm{Int}(\mathcal{G}^{(0)}\setminus U=\emptyset) $
is  compact open invariant. Now, assume
that $p\not=0$. We claim that $U$ or $ \mathrm{Int}(\mathcal{G}^{(0)}\setminus U)$ is compact.
First, note that  \cite[Lemma 1.3]{Clark2018} implies that there exists $\{k_i\}_{i=1}^t\subseteq K\setminus\{0\}$
such that
 $p=\sum_{i=1}^tk_i1_{U_i}$ where $U_i=p^{-1}(k_i) $ is a compact open bisection for each $i$.
 Since $p$ is central, it is a class function, so $1_{U_i}$ is also a class function for each $i$.
It is well-known that every homogeneous idempotent is in the zero component. Then by \cite[Lemma 1.4]{Clark2018},  $U_i$ is a compact open invariant set for each $i$. Also, it is clear that $U_1,U_2,\ldots,U_t$ are mutually disjoint.
 Assume on the contrary that $U$ and $ \mathrm{Int}(\mathcal{G}^{(0)}\setminus U)$ are not compact.  Then $U_i\not=U$ and $U_i\not=\mathrm{Int}(\mathcal{G}^{(0)}\setminus U)$ for each $i$.
If  $U_j\cap U\not=\emptyset$ for some $j$, then there is a compact open subset $V$ of $\mathcal{G}^{(0)}$ such that $V\subseteq U_j\cap U$. Since $\mathcal{G}^{(0)}$ is open in $G$, $V$ is a compact open bisection in $\mathcal{G}$, so $1_V\in A_K(\mathcal{G})$. Since $s(1_V)=V\subseteq U$, $1_V\in I$.
But, 
\[1_V*p=\sum_{i=1}^tk_i(1_V*1_{U_i})=k_j1_V+\sum_{\substack{i=1\\i\not=j}}^tk_i1_{VU_i}=k_j1_V\not=0,\] a contradiction. Thus $U_j\cap U=\emptyset$ and so $U_j\subsetneq \mathrm{Int}(\mathcal{G}^{(0)}\setminus U)$ for each $i$. Since $\cup_{i=1}^t U_i$ is compact and $\mathrm{Int}(\mathcal{G}^{(0)}\setminus U)$ is not compact, 
$\bigcup_{i=1}^t U_i\subsetneq \mathrm{Int}(\mathcal{G}^{(0)}\setminus U)$, and that
$\mathrm{Int}(\mathcal{G}^{(0)}\setminus U)\setminus \cup_{i=1}^t U_i\not=0$.
Then there is a compact open subset $V$ of $\mathcal{G}^{(0)}$ such that $V\subseteq \mathrm{Int}(\mathcal{G}^{(0)}\setminus U)\setminus \cup_{i=1}^t U_i$. As discussed above, $V$ is a compact open bisection in $\mathcal{G}$, so $1_V\in A_K(\mathcal{G})$. 
One can show that $1_V\in r_{A_K(\mathcal{G})}(I)=pA_K(\mathcal{G})$. But,
\[p*1_V=\sum_{i=1}^tk_i(1_{U_i}*1_V)=\sum_{i=1}^tk_i1_{U_iV}=0\not=1_V,\]
a contradiction. Therefore, $U$ or $\mathrm{Int}(\mathcal{G}^{(0)}\setminus U)$ must be compact.
\end{proof}
It should be noted that the involution on a graded quasi-Baer $\ast$-ring must be graded semiproper. However, we don't need any additional assumption on the field $K$  in the previous theorem. Indeed, Proposition \ref{pro11} guarantees the involution on $A_K(\mathcal{G})$ is graded semiproper.

As a consequence of Theorem \ref{1}, we have the following.
\begin{corollary}\label{}
Let $K$ be a field with involution, $\mathcal{G}$ be an ample Hausdorff groupoid with compact unit space, $\Gamma$ be a discrete group, and $c :\mathcal{G}\rightarrow \Gamma$ be a cocycle such that $c^{-1}(\varepsilon)$ is strongly effective. 
If $\mathcal{G} $ is strongly effective, then $A_K(\mathcal{G})$ is a quasi-Baer $\ast$-ring if and only if for every open invariant subset $U$ of $\mathcal{G}^{(0)}$, $U$ or $\mathrm{Int}(\mathcal{G}^{(0)}\setminus U)$ is compact.
\end{corollary}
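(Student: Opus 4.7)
The strategy is to reduce the corollary to Theorem \ref{1} by showing that, under the additional hypothesis that $\mathcal{G}$ is strongly effective, the ungraded quasi-Baer $\ast$ condition for $A_K(\mathcal{G})$ coincides with the graded quasi-Baer $\ast$ condition. Once this reduction is in place the statement is immediate, since Theorem \ref{1} characterises the latter by the compactness dichotomy on open invariant subsets of $\mathcal{G}^{(0)}$, and so characterises the former as well.

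To realise this reduction I would verify two structural facts about $A_K(\mathcal{G})$. First, every (two-sided) ideal of $A_K(\mathcal{G})$ is graded. For strongly effective ample Hausdorff groupoids the ideal-theoretic uniqueness theorem (a Cuntz--Krieger type result of the sort used in \cite{Clark2018,Clark2019}) yields a lattice isomorphism between ideals of $A_K(\mathcal{G})$ and open invariant subsets $U\subseteq \mathcal{G}^{(0)}$, via $U\mapsto A_K(\mathcal{G}|_U)$, and each such ideal is manifestly graded. Second, every central projection of $A_K(\mathcal{G})$ is homogeneous of degree $\varepsilon$. Indeed, by \cite[Proposition~4.13]{Steinberg2010} central elements are class functions; when $\mathcal{G}$ is effective (which is implied by strongly effective), a class function $f\in A_K(\mathcal{G})$ has support in $\mathrm{Iso}(\mathcal{G})$, and since this support is open it must lie in $\mathrm{Int}(\mathrm{Iso}(\mathcal{G}))=\mathcal{G}^{(0)}$. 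Writing $f=\sum_i k_i 1_{V_i}$ with disjoint compact open $V_i\subseteq \mathcal{G}^{(0)}$, the class-function identity forces each $V_i$ to be invariant, and imposing $f^2=f$ together with $f^*=f$ collapses the coefficients to $1$, leaving $f=1_V$ for some compact open invariant $V\subseteq\mathcal{G}^{(0)}\subseteq c^{-1}(\varepsilon)$, which is homogeneous of degree $\varepsilon$.

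Combining the two facts: in a quasi-Baer $\ast$-ring the projection generating the right annihilator of any ideal is central (as recalled in Section 2), and by the second fact this projection is homogeneous; together with the first fact this shows that quasi-Baer $\ast$ and graded quasi-Baer $\ast$ coincide for $A_K(\mathcal{G})$. Theorem \ref{1} then finishes the argument: the compactness dichotomy is equivalent to graded quasi-Baer $\ast$, hence to quasi-Baer $\ast$. The main obstacle I expect is the precise identification of central projections as characteristic functions $1_V$ of compact open invariant subsets of $\mathcal{G}^{(0)}$; the ideal-lattice step is a direct appeal to the uniqueness theorem for strongly effective groupoids, and the explicit generating projections $1_V$ and $1_{\mathcal{G}^{(0)}}-1_V$ produced in the proof of Theorem \ref{1} require no further modification.
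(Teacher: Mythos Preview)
Your proposal is correct and follows essentially the same route as the paper: both reduce the corollary to Theorem~\ref{1} by using strong effectiveness of $\mathcal{G}$ to conclude that every ideal of $A_K(\mathcal{G})$ is graded (the paper invokes \cite[Corollary~3.7]{Clark2019-2} for this), so that the quasi-Baer $\ast$ and graded quasi-Baer $\ast$ conditions coincide. Your second step---identifying central projections as $1_V$ for compact open invariant $V\subseteq\mathcal{G}^{(0)}$ via the class-function description---is correct but more than the paper needs: once every ideal is graded, the annihilator $r(I)$ is a graded direct summand, and a central idempotent generating a graded direct summand of a unital graded ring is automatically homogeneous, so the paper simply asserts the equivalence without this intermediate analysis.
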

\begin{proof}
Note that if $\mathcal{G} $ is strongly effective, then every ideal of $A_K(\mathcal{G})$ is graded. For this, let $I$ be an ideal of $A_K(\mathcal{G})$. Then by \cite[Corollary 3.7]{Clark2019-2} there is an open invariant subset $U\subseteq \mathcal{G}^{(0)}$ such that $I=A_K(\mathcal{G}|_U)$.
If $f=\sum_{B\in F}a_B1_B\in I$, where $F$ is a finite subset of mutually disjoint elements of $B^{co}_*(\mathcal{G})$. It is then clear that $1_B$ is also in $A_K(\mathcal{G}|_U)=I$ for each $B\in F$, so $I$ is indeed a graded ideal.

Then we can see that $A_K(\mathcal{G})$ is  quasi-Baer $\ast$ if and only if it is graded quasi-Baer $\ast$. So the result is now a direct consequence
of Theorem \ref{1}.
\end{proof}
\section{Graded quasi-Baer $\ast$ condition for Leavitt path algebras}
In this section we explain what Proposition \ref{pro11} and Theorem \ref{1}
say about a Leavitt path algebra of a directed graph. We start by gathering background needed to
state Corollaries \ref{8.2} and \ref{8.4}.
\subsection{Graph concepts}
A directed graph $E = (E ^0 ,E ^1 ,r,s)$ consists of two sets $E ^0$ , $E^ 1$ and two maps $ r ,s : E ^1~\rightarrow~E ^0 $. The elements of $E ^0$ are called \textit{vertices} and the elements of $E ^1$ \textit{edges}. If $s^{ -1} (v)$ is
a finite set for every $v \in E ^0 $, then the graph is called \textit{row-finite}. In this setting, if the number of vertices is finite, then the number of edges is finite as well, and we call $E$ a \textit{finite} graph.

A vertex $v$ for which $s ^{-1} (v)$ is empty is called a \textit{sink}, while a vertex $w$ for which $r ^{-1} (w)$
is empty is called a \textit{source}. A vertex $v \in E ^0$ such that $|s ^{-1} (v)| = \infty$, is called an \textit{infinite
emitter}. If $v$ is either a sink or an infinite emitter, then it is called a \textit{singular vertex}. If $v$
is not a singular vertex, it is called a \textit{regular vertex}.
The expressions $\textrm{Sink}(E)$, $\textrm{Source}(E)$,  $\textrm{Reg}(E)$, and $\textrm{Inf}(E)$ will be
used to denote, respectively, the sets of sinks, sources, regular vertices, and infinite
emitters of $E$.

 A finite path $\mu$ in a graph $E$ is a
sequence of edges $\mu = \mu_ 1\ldots \mu_ k $  such that $r(\mu_i ) = s(\mu_{ i+1} )$, $1 \leq i \leq k - 1$. In this case,
$s(\mu) := s(\mu_ 1 )$ is the \textit{source} of $\mu$, $r(\mu) := r(\mu_ k )$ is the \textit{range} of $ \mu $, and $k$ is the \textit{length} of
$ \mu $ which is denoted by $|\mu|$. 
The set of all finite paths of a $E$ is denoted by $\textrm{Path}(E)$.
An infinite path is an infinite sequence of edges $\mu = \mu_ 1\mu_ k \ldots$ such that $r(\mu_i ) = s(\mu_{ i+1} )$  for every $i \in \N$. The set of all infinite paths of  $E$ is denoted by $E^{\infty}$.

A \textit{preorder} $\geq$ on $E^0$ defined by:
$v \geq w$ if there is a path $\mu \in \textrm{Path}(E)$ such that $s(\mu)=v$ and  $r(\mu)=w$.
If $v \in E^0$ then the
\textit{tree} of $v$ is the set
$T(v)=\{w  \mid w\in E^0,v\geq w\}.$
A subset $H$ of $E^ 0$ is called \textit{hereditary} if $v \geq w$ and $v \in H$
imply $w \in H$. A hereditary set is \textit{saturated} if every regular vertex which
feeds into $H$ and only into $H$ is again in $H$, that is, if $s^{ -1} (v) \not= \emptyset$ is finite and
$r(s^{ -1} (v)) \subseteq H$ imply $v \in H$. For a hereditary subset $H$ we denote by $\overline{H}$, the
saturated closure of $H$, i.e., the smallest hereditary and saturated subset of $E^ 0$
containing $H$.

Let $H$ be a nonempty hereditary
and saturated subset of $E^ 0$. 

Following \cite{Clark2019}
we define
\begin{align*}
	F _E (H):=&\{\alpha\in \operatorname{Path}(E) \mid s(\alpha_ 1 ), r(\alpha_ i )\in E^ 0\setminus  H \text{ for } i < |\alpha|, r(\alpha_{ |\alpha|} )\in H\}.
\end{align*}
Given a nonempty hereditary and saturated subset $H$ of $E^0$, define
\[H^{tc}=\{u\in E^0\mid H\cap T(u)=\emptyset\}.\] 
The subset $H^{tc}$ is  hereditary and saturated, and $H\cap H^{tc}=\emptyset$ (see \cite[Remark 13]{ma2}).
We note that $H^{tc}$ corresponds to $H'$ from \cite{4}, $E^0-\overline{H}$ from \cite{3}, and $H^{\perp}$ from \cite{dozd}.
\subsection{Leavitt path algebras}
For a graph $E$ and a field $K$,
the Leavitt path algebra of $E$, denoted by $L _K (E)$, is the algebra generated by the sets
$\{v \mid v \in E^ 0 \}$, $\{e \mid e \in E^ 1 \}$, and $\{e^* \mid e \in E^ 1 \}$ with the coefficients in $K$, subject to the
relations
\begin{enumerate}
\item[(V)] $v_iv_j=\delta_{ij}v_i$  for every $v_i,v_j\in  E^ 0$,
\item[(E1)] $s(e)e=e r(e)=e$   for all $e\in  E^ 1 $,
\item[(E2)] $r(e)e^*=e^* s(e)=e^*$   for all $e\in  E^ 1 $,
\item[(CK1)] $e^*e'=\delta_{ee'}r(e)$   for all $e,e'\in E ^1$,
\item[(CK2)] $\sum_{\{e\in E ^1 ,s(e)=v\}}ee^*= v$, for every  vertex $v \in Reg(E) $.
\end{enumerate}

 It can be proved that $L_K(E)$ is a unital ring  if and only if  $E^0$ is finite. 
If $^-:K\rightarrow K$ is an involution on $K$, then it is straightforward to see that the map $\ast$ given by \[\big(\sum_{i=1}^n k_i\alpha\beta^* \big)^*=\sum_{i=1}^n \overline{k_i}\beta\alpha^* \]
defines the involution on $L_K(E)$ making it into a $\ast$-algebra.
The \textit{canonical} grading given to a Leavitt path algebra is a $\Z$-grading with the
$n$-component
\[L_K(E)_n =\Big\{\sum_ik_i\alpha_i\beta_i^* \Big| \alpha_i,\beta_i\in \textrm{Path}(E), k_ i \in K, \textrm{ and } |\alpha_i | - |\beta_i| = n \textrm{ for all } i\Big\}.\]

\subsection{The Steinberg algebra model of a Leavitt path algebra}
We recall the construction of a groupoid $\mathcal{G}_E$ from an arbitrary graph $E$, which
was introduced in \cite{Kumijan1997} for row-finite graphs and generalized to arbitrary graphs
in \cite{Paterson2002}. We use the notation of \cite{Clark2019}. Define
\[X := E^{\infty}\cup \{\mu \in \textrm{Path}(E) \mid r(\mu) \in \textrm{Sink}(E)\} \cup \{\mu\in \textrm{Path}(E) \mid r(\mu) \in \textrm{Inf}(E)\},\]
and
\[\mathcal{G}_E := \{(\alpha x, |\alpha| - |\beta|, \beta x) \mid \alpha,\beta\in  \textrm{Path}(E), x \in X, r(\alpha) = r(\beta) = s(x)\}.\]
A pair of elements in $ \mathcal{G}_E  $ is  composable if and only if it is of the form
$((x, k, y), (y, l, z))$ and then the composition and inverse maps are defined such
that
\[(x, k, y)(y, l, z) := (x, k + l, z) \hspace*{1cm} \text{~and~}\hspace*{1cm} (x, k, y)^{-1} := (y,-k, x).\]
Thus
$ \mathcal{G}_E^{(0)}= \{(x, 0, x) \mid x \in X\}$, which we identify with $X$.
Next we see how $ \mathcal{G}_E  $ can be viewed as an ample groupoid. For $\mu\in \textrm{Path}(E)$
define
\[Z(\mu) := \{\mu x \mid x \in X, r(\mu) = s(x)\} \subseteq X.\]
For $\mu\in \textrm{Path}(E)$ and a finite $F \subseteq s^{-1}(r(\mu))$, define
\[Z(\mu \setminus F) := Z(\mu) \cap \big(\mathcal{G}_E^{(0)}\setminus(\bigcup_{\alpha\in F}Z(\mu\alpha)\big).\]
The sets of the form $Z(\mu\setminus F)$ are a basis of compact open sets for a
Hausdorff topology on $X =  \mathcal{G}_E^{(0)}$ by \cite[Theorem 2.1]{Webster2014}.

For each $\mu,\upsilon \in \textrm{Path}(E)$ with $r(\mu) = r(\upsilon)$, and finite $F \subseteq \textrm{Path}(E)$ such that $r(\mu) = s(\alpha)$ for all $\alpha\in F$, define
\[Z(\mu,\upsilon) := \{(\mu x, |\mu| - |\upsilon|, \upsilon x) \mid x \in X, r(\mu) = s(x)\},\]
and then
\[Z((\mu, \upsilon) \setminus F) := Z(\mu,\upsilon) \cap \big(\mathcal{G}_E \setminus (\bigcup_{\alpha\in F}
Z(\mu\alpha, \upsilon\alpha)\big).\]
The collection $ Z((\mu, \upsilon) \setminus F) $ forms a basis of compact open bisections that
generates a topology such that $  \mathcal{G}_E $ is a Hausdorff ample groupoid.

Observe that the map $c :  \mathcal{G}_E \rightarrow \Z$ given by $c(x, k, y) = k$ is a continuous
cocyle such that
\[\textrm{Iso}(c^{-1}(0))=c^{-1}(0)\cap \{\gamma\in \mathcal{G}_E\mid s(\gamma)=r(\gamma)  \}=\mathcal{G}_E^{(0)}. \]
Thus $c^{-1}(0)$ is a principal groupoid (and hence an effective groupoid).
 
\cite[Example 3.2]{Clark2015} shows that the map
$\pi: L_R(E)\rightarrow A_R(\mathcal{G}_E)$ such that
\[ \pi(\mu\upsilon^*-\sum_{\alpha\in F}\mu\alpha\alpha^*\upsilon^*)=1_{ Z((\mu, \upsilon) \setminus F) }\]
extends to a \Z-graded $\ast$-isomorphism where for $n \in \Z$
\[A_R(\mathcal{G}_E)_n := \{f \in A_R(\mathcal{G}_E)\mid f(x, k, y) \not=0\Rightarrow k = n\}. \]

Using these results, and Proposition \ref{pro11}, we obtain the following. It is a generalisation of \cite[Proposition 11]{ma2} for Leavitt path algebras over commutative $\ast$-rings.
\begin{corollary}\label{8.2}
  Let $E$ be an arbitrary graph and $R$ a commutative unital $\ast$-ring. Then the following are equivalent.
\begin{enumerate}
\item\label{pro11_1} The involution on $R$ is proper;
\item\label{pro11_2} The involution on $L_R(E)$ is graded proper;
\item\label{pro11_3} The involution on $L_R(E)$ is graded semiproper.
\end{enumerate}
In particular, if $K$ is a field with involution, then the involution on $L_R(E)$ is graded proper.
\end{corollary}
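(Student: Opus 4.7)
The plan is to reduce the statement directly to Proposition \ref{pro11} via the Steinberg algebra realisation of the Leavitt path algebra. The paragraph immediately preceding the corollary sets up exactly the ingredients needed: the map $\pi \colon L_R(E) \to A_R(\mathcal{G}_E)$ is a $\mathbb{Z}$-graded $\ast$-isomorphism, $\mathcal{G}_E$ is an ample Hausdorff groupoid, and $c \colon \mathcal{G}_E \to \mathbb{Z}$ is a continuous cocycle into the discrete group $\mathbb{Z}$.

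First I would observe that since $\pi$ preserves both the $\mathbb{Z}$-grading and the involution, the properties of being a graded proper or graded semiproper $\ast$-ring transfer across $\pi$: an involution on $L_R(E)$ is graded proper (resp.\ graded semiproper) if and only if the involution on $A_R(\mathcal{G}_E)$ is graded proper (resp.\ graded semiproper). So it suffices to prove the equivalence of \ref{pro11_1}, \ref{pro11_2}, \ref{pro11_3} for $A_R(\mathcal{G}_E)$ in place of $L_R(E)$.

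Next, I would verify that the hypothesis of Proposition \ref{pro11} is satisfied for $\mathcal{G}_E$ with the cocycle $c$. The text records $\textrm{Iso}(c^{-1}(0)) = \mathcal{G}_E^{(0)}$, which means $c^{-1}(0)$ is principal and therefore effective (its interior of isotropy equals the unit space). Hence Proposition \ref{pro11} applies directly to $(R, \mathcal{G}_E, \mathbb{Z}, c)$ and yields the equivalence of the three conditions on $A_R(\mathcal{G}_E)$; transferring back through $\pi$ gives the equivalence for $L_R(E)$.

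The last sentence, the special case of a field $K$ with involution, follows because any involution on a field is automatically proper: if $a\overline{a} = 0$ then $a = 0$ in a field, so condition \ref{pro11_1} holds unconditionally, and \ref{pro11_2} then gives that the involution on $L_K(E)$ is graded proper. No essential obstacle is expected; the entire argument is a direct translation through the isomorphism $\pi$, and the only thing to be careful about is confirming that all relevant structural features (the grading, the involution, the effectiveness of $c^{-1}(\varepsilon)$) are correctly inherited, which has already been recorded in the preceding subsection.
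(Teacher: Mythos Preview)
Your proposal is correct and matches the paper's approach exactly: the paper presents the corollary without a separate proof, simply stating that it follows from the preceding results (the $\mathbb{Z}$-graded $\ast$-isomorphism $\pi$ and the effectiveness of $c^{-1}(0)$) together with Proposition~\ref{pro11}, which is precisely the reduction you carry out.
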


Following \cite[Definition 3.2]{Clark2017} for a  hereditary and saturated subset $H$ we define
\[U_H := \{x \in  \mathcal{G}_E^{(0)}\mid
 r(x_n) \in H \text{~for some~}  n \geq 0\}.\]
 By \cite[Lemma 3.4]{Clark2017} $U_H$ is an open subset of $\mathcal{G}_E^{(0)}$.


\begin{proposition}\label{pro2}
Let $E$ be a finite graph, and $H$ be a hereditary and saturated subset of $E^0$. Then
\begin{enumerate}
\item\label{pro2_1}$U_{H}$ is compact if and only if $F_E(H)$ is finite.
\item\label{pro2_2} $\mathrm{Int}(\mathcal{G}_E^{(0)}\setminus U_{H})$ is compact if and only if $F_E(H^{tc})$ is finite.
\end{enumerate}
\end{proposition}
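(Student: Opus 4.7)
The plan is to exploit the fact that in a finite graph there are no infinite emitters, so the sets $Z(\mu)$ (for $\mu$ a finite path, or a vertex viewed as a length-zero path) form a basis of compact open sets for $X = \mathcal{G}_E^{(0)}$. Both statements will then reduce to counting how many of these basis sets are needed to exhaust certain subsets.

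For part \ref{pro2_1}, I would first establish the decomposition
\[
U_H \;=\; \Bigl(\bigsqcup_{\alpha\in F_E(H)} Z(\alpha)\Bigr)\; \sqcup\; (H\cap \textrm{Sink}(E)),
\]
where the second piece is a finite set of isolated points. Given $x\in U_H$, take the minimal index $n$ with $r(x_n)\in H$: if $n=0$ then $x$ is the length-zero path at a sink in $H$; if $n\geq 1$ then $\alpha=x_1\cdots x_n$ lies in $F_E(H)$ and $x\in Z(\alpha)$. Disjointness of the family $\{Z(\alpha)\}_{\alpha\in F_E(H)}$ follows from the definition of $F_E(H)$: no element of $F_E(H)$ can be a strict prefix of another, since the range of each $\alpha\in F_E(H)$ is the \emph{first} vertex of $\alpha$ lying in $H$. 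This expresses $U_H$ as a pairwise disjoint union of nonempty compact open sets. Finiteness of $F_E(H)$ then equates to compactness of $U_H$: the easy direction is that a finite disjoint union of compact opens is compact, while the reverse uses the standard fact that a compact space cannot be covered by infinitely many pairwise disjoint nonempty open sets.

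For part \ref{pro2_2}, the crucial step is to identify
\[
\mathrm{Int}(\mathcal{G}_E^{(0)}\setminus U_H) \;=\; U_{H^{tc}},
\]
after which the claim follows by applying part \ref{pro2_1} with $H^{tc}$ in place of $H$. The inclusion $\supseteq$ is immediate: if $r(x_n)\in H^{tc}$ then by definition no path out of $r(x_n)$ reaches $H$, so $Z(x_1\cdots x_n)$ is an open neighbourhood of $x$ disjoint from $U_H$. For $\subseteq$, given $x$ in the interior, pick a basic neighbourhood $Z(\mu)\subseteq \mathcal{G}_E^{(0)}\setminus U_H$ containing $x$; if $r(\mu)\notin H^{tc}$ there would be a path from $r(\mu)$ to some vertex of $H$, which one can extend (using finiteness of $E$ and hereditariness of $H$) to an element of $X$ lying in $Z(\mu)\cap U_H$, a contradiction. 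Hence $r(\mu)\in H^{tc}$ and $x\in U_{H^{tc}}$.

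The only real obstacle is the bookkeeping around length-zero paths at sinks and the convention for the index $n=0$ in the definition of $U_H$, which must be handled so that the decomposition above is truly a disjoint cover. Once that is set up cleanly, the topological arguments are essentially automatic.
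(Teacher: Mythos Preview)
Your strategy for part~\ref{pro2_2} matches the paper's exactly: prove $U_{H^{tc}} = \mathrm{Int}(\mathcal{G}_E^{(0)}\setminus U_H)$ and then invoke part~\ref{pro2_1} for $H^{tc}$. Your argument for this identity is if anything slightly more careful than the paper's, since you note that the finite path $\mu\beta$ into $H$ must be extended to a genuine element of $X$ before it can witness $Z(\mu)\cap U_H\neq\emptyset$.

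For part~\ref{pro2_1}, however, your proposed decomposition is wrong. The assertion ``if $n=0$ then $x$ is the length-zero path at a sink in $H$'' fails: with the standard convention $r(x_0)=s(x)$, the case $n=0$ merely says $s(x)\in H$, and such an $x$ can perfectly well be an infinite path (or a positive-length finite path ending at a sink of $H$). Thus the term $H\cap\mathrm{Sink}(E)$ in your union omits every $x\in U_H$ of positive length with source in $H$. The correct disjoint decomposition is
\[
U_H \;=\; \bigsqcup_{v\in H} Z(v)\;\sqcup\;\bigsqcup_{\alpha\in F_E(H)} Z(\alpha),
\]
which is precisely \cite[Lemma~2.1]{Clark2017}. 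Since $H\subseteq E^0$ is finite, the first block is compact and clopen in $U_H$, and with this correction the remainder of your compactness argument (disjoint nonempty open cover of a compact set must be finite) goes through unchanged. The paper itself gives no direct argument for part~\ref{pro2_1}, simply citing \cite[Proposition~2.3]{Clark2017}, so modulo this fix your treatment is in fact more self-contained.
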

\begin{proof}
\ref{pro2_1} Follows from \cite[Proposition 2.3]{Clark2017}.

\ref{pro2_2} Using Part \ref{pro2_1} by replacing $H$ with $H^{tc}$, we obtain that
$F_E(H^{tc})$ is finite if and only if $U_{H^{tc}}$ is compact. Thus to prove \ref{pro2_1},  we only need to show that $U_{H^{tc}}=\mathrm{Int}(\mathcal{G}_E^{(0)}\setminus U_{H})$. First, we show $U_{H^{tc}}\subseteq\mathrm{Int}(\mathcal{G}_E^{(0)}\setminus U_{H})$. Note that
 \cite[Lemma 2.1]{Clark2017} implies that
 \begin{align*}
U_{H^{tc}}&=\big( \bigcup_{v\in H^{tc}}Z(v) \big)\cup \big( \bigcup_{\alpha\in F_E(H^{tc})}Z(\alpha) \big). \\
\end{align*}
Thus we need to show that $ Z(v)\subseteq \mathrm{Int}(\mathcal{G}_E^{(0)}\setminus U_{H})$
for each $ v\in H^{tc} $, and $ Z(\alpha)\subseteq \mathrm{Int}(\mathcal{G}_E^{(0)}\setminus U_{H})$ for each $\alpha\in F_E(H^{tc}) $.
Fix $v\in H^{tc}$, and let $x\in Z(v)$. Then $s(x)=v\in H^{tc}$, so  $r(x_i)\not\in H$ for each $i$. This implies  $x\not\in U_H$, and that $x\in \mathcal{G}_E\setminus U_{H}$. Thus
$Z(v)\subseteq \mathcal{G}_E^{(0)}\setminus U_{H}$, and since $Z(v)$ is open we obtain that $Z(v)\subseteq \mathrm{Int}(\mathcal{G}_E^{(0)}\setminus U_{H})$. Now, fix $\alpha\in F_E(H^{tc})$, and let $\alpha x\in Z(\beta)$. Then $s(\alpha),r(\alpha_{i})\not\in H^{tc}$ for $i<|\alpha|$, and  $r(\alpha_{|\alpha|})=s(x)\in H^{tc}$. Thus $r(x_i)\not\in H$  for each $i$, and that $s(\alpha),r(\alpha_{i})\not\in H$ for $i<|\alpha|$.
Thus $\alpha x\not\in U_H$, and so $\alpha x\in \mathcal{G}_E\setminus U_{H}$. Thus
$Z(\alpha)\subseteq \mathcal{G}_E^{(0)}\setminus U_{H}$, and since $Z(\alpha)$ is open, $Z(\alpha)\subseteq \mathrm{Int}(\mathcal{G}_E^{(0)}\setminus U_{H})$. Hence we conclude that 
$U_{H^{tc}}\subseteq \mathrm{Int}(\mathcal{G}_E^{(0)}\setminus U_{H})$.
For the reverse inclusion, let $x\in \mathrm{Int}(\mathcal{G}_E^{(0)}\setminus U_{H})$. Then $s(x)\not\in H$, and for every
initial subpath $\alpha$ of $x$, $\alpha\not\in F_E(H)$. If $s(x)\in H^{tc}$, then $x\in Z(s(x))$, and that $x\in U_{H^{tc}}$. Otherwise, we claim that there is a subpath $\alpha$ of $x$ such that $\alpha\in F_E(H^{tc})$. Towards a contradiction, suppose that  $r(x_i)\not \in H^{tc}$ for each $i$. As  $x\in \mathrm{Int}(\mathcal{G}_E^{(0)}\setminus U_{H})$, there is 
a basic open set $Z(\alpha)$ such that
$x\in Z(\alpha)\subseteq \mathcal{G}_E^{(0)}\setminus U_{H}$.
This yields that $r(\alpha)\not\in H^{tc}$. Then $T(r(\alpha))\cap H\not= \emptyset$. So there is $w\in H$ such that $v\geq w$. Then there exists $\beta\in \textrm{Path}(E)$ such that $s(\beta)=r(\alpha)$ and $r(\beta)=w$. Thus $\alpha\beta\in Z(\alpha)$ and $\alpha\beta\in U_H$, a contradiction.
Hence there is a subpath $\alpha$ of $x$ such that $\alpha\in F_E(H^{tc})$. Then $x\in Z(\alpha)$, and that $x\in U_{H^{tc}}$. Therefore, $\mathrm{Int}(\mathcal{G}_E^{(0)}\setminus U_{H})\subseteq U_{H^{tc}}$ as desired.
\end{proof}
Now we have the following result about the characterization of graded quasi-Baer $\ast$ Leavitt path algebras.
\begin{corollary}[\cite{ma2}, Corollary 20, and \cite{dozd}, proposition 4.5]\label{8.4}
Let $E$ be a finite graph  and $K$ a field with involution.  Then the following are equialent.
\begin{enumerate}
	\item\label{Last_1} The Leavitt path algebra $L_K(E)$ is a graded quasi-Baer $\ast$-ring;
	\item\label{Last_2} For each nonempty hereditary and saturated subset $H$ of $E^0$,   no cycle outside $H$ leads to $H$ or no cycle outside $H^{tc}$ leads to $H^{tc}$;
    \item\label{Last_2'} For each nonempty hereditary and saturated subset $H$ of $E^0$, no vertex of a cycle of $E$ with vertices in $E^ 
    0\setminus (H \cup H^{tc})$ emits a path to $H$ and a path to $H^{tc}$.
	\item\label{Last_3}  For each nonempty hereditary and saturated subset $H$ of $E^0$, the saturated closure of $H^{\perp}\cup H^{\perp\perp}$ is  $E^0$;
	\item\label{Last_3'} For each nonempty hereditary and saturated subset $H$ of $E^0$, $H^{\perp}\vee H^{\perp\perp}=E^0$.
	\end{enumerate} 
\end{corollary}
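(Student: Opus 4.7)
The plan is to deduce (\ref{Last_1}) $\Leftrightarrow$ (\ref{Last_2}) from Theorem~\ref{1} through the Steinberg model of $L_K(E)$ and Proposition~\ref{pro2}, and then verify the remaining graph-theoretic equivalences among (\ref{Last_2})--(\ref{Last_3'}).

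I would first transfer the problem to Steinberg algebras using the $\mathbb{Z}$-graded $\ast$-isomorphism $L_K(E)\cong A_K(\mathcal{G}_E)$ of \cite[Example~3.2]{Clark2015}. Since $E$ is finite, $\mathcal{G}_E^{(0)}=\bigcup_{v\in E^0}Z(v)$ is a finite union of compact open sets, hence compact. The cocycle $c:\mathcal{G}_E\to\mathbb{Z}$ has $\mathrm{Iso}(c^{-1}(0))=\mathcal{G}_E^{(0)}$, so $c^{-1}(0)$ is principal; since every reduction of a principal groupoid is principal and hence effective, $c^{-1}(0)$ is strongly effective. All hypotheses of Theorem~\ref{1} then hold, so $L_K(E)$ is graded quasi-Baer $\ast$ iff for every open invariant $U\subseteq\mathcal{G}_E^{(0)}$, either $U$ or $\mathrm{Int}(\mathcal{G}_E^{(0)}\setminus U)$ is compact.

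Next, I would invoke the standard bijection $H\leftrightarrow U_H$ between hereditary and saturated subsets of $E^0$ and open invariant subsets of $\mathcal{G}_E^{(0)}$ from \cite{Clark2017}, and combine it with Proposition~\ref{pro2} to rewrite the condition as: for every nonempty hereditary and saturated $H$, $F_E(H)$ or $F_E(H^{tc})$ is finite. Since $E$ is finite, $F_E(H)$ is finite iff no cycle in $E^0\setminus H$ has a vertex reaching $H$: otherwise, repeated traversals of the cycle produce infinitely many paths in $F_E(H)$, while its absence bounds the length of elements of $F_E(H)$ by $|E^0|$. This gives (\ref{Last_1}) $\Leftrightarrow$ (\ref{Last_2}).

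Then I would verify the remaining equivalences. For (\ref{Last_2}) $\Leftrightarrow$ (\ref{Last_2'}): $E^0\setminus H^{tc}$ is the set of vertices reaching $H$, so every cycle outside $H^{tc}$ has each vertex reaching $H$; a cycle outside $H^{tc}$ meeting $H$ lies in $H$ by the hereditary property, hence cannot reach $H^{tc}$ by $H\cap H^{tc}=\emptyset$. Consequently, cycles outside $H^{tc}$ that reach $H^{tc}$ coincide with cycles in $E^0\setminus(H\cup H^{tc})$ whose vertices reach both $H$ and $H^{tc}$, so the disjunction (\ref{Last_2}) collapses to (\ref{Last_2'}). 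Equivalence (\ref{Last_3}) $\Leftrightarrow$ (\ref{Last_3'}) is definitional, since $H^{\perp}\vee H^{\perp\perp}$ is the saturated closure of $H^{\perp}\cup H^{\perp\perp}$. The only remaining step, (\ref{Last_2'}) $\Leftrightarrow$ (\ref{Last_3}), is purely graph-theoretic and is the content of \cite[Corollary~20]{ma2} and \cite[Proposition~4.5]{dozd}: in one direction, $v\notin\overline{H^{\perp}\cup H^{\perp\perp}}$ forces a cycle reachable from $v$ whose vertices emit paths to both $H$ and $H^{\perp}=H^{tc}$, and conversely any such cycle produces a vertex outside the saturated closure. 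This translation between saturation failure and the existence of a doubly-reaching cycle in a finite graph is the main technical obstacle, but it follows by a routine induction tracking the saturation process on the finite vertex set.
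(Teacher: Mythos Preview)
Your proposal is correct and follows essentially the same route as the paper: both reduce \ref{Last_1}$\Leftrightarrow$\ref{Last_2} to Theorem~\ref{1} via the Steinberg model $L_K(E)\cong A_K(\mathcal{G}_E)$, Proposition~\ref{pro2}, and the bijection $H\leftrightarrow U_H$, and both treat the remaining items as purely graph-theoretic equivalences.

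The only substantive difference is in \ref{Last_2'}$\Leftrightarrow$\ref{Last_3}. You defer this to \cite{ma2,dozd} with a brief sketch, whereas the paper supplies a self-contained argument: for \ref{Last_2'}$\Rightarrow$\ref{Last_3} it starts from $v_0\notin\overline{H^{\perp}\cup H^{\perp\perp}}$ and, using that $v_0$ cannot be a sink and that saturation fails along some outgoing edge, iterates inside the finite vertex set until a cycle is produced with all vertices outside $\overline{H^{\perp}\cup H^{\perp\perp}}$, then checks its exits violate \ref{Last_2'}; for \ref{Last_3}$\Rightarrow$\ref{Last_2'} it takes a cycle $c$ witnessing failure of \ref{Last_2'}, observes its vertices lie outside $H^{\perp}\cup H^{\perp\perp}$, and walks down the saturation levels $(H^{\perp}\cup H^{\perp\perp})_n$ along the edges of $c$ to reach a contradiction. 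Your sketch captures exactly this idea, so nothing is missing conceptually; the paper simply writes it out in full, which is worth doing here since the point of the corollary is to show that Theorem~\ref{1} gives an independent proof of the results in \cite{ma2,dozd}, and citing those references for the last step would make the derivation partly circular.

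A minor remark: your treatment of \ref{Last_2}$\Leftrightarrow$\ref{Last_2'} is actually more explicit than the paper's ``It is clear''; your observation that a cycle outside $H^{tc}$ reaching $H^{tc}$ must lie in $E^0\setminus(H\cup H^{tc})$ and automatically reaches $H$ is exactly what is needed, and the symmetric fact that a cycle outside $H$ reaching $H$ also lies in $E^0\setminus(H\cup H^{tc})$ (so the first disjunct implies \ref{Last_2'}) completes the collapse you assert.
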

\begin{proof}
	\ref{Last_1}$\Leftrightarrow$\ref{Last_2}
	Note that when $E$ is a   finite graph,  $F_E(H)$ is finite if and only if  no cycle outside $H$ leads to $H$ or no cycle outside $H^{tc}$ leads to $H^{tc}$. Now
Proposition \ref{pro2} and Theorem \ref{1}  by considering the graph groupoid $\mathcal{G}_E$ yield the result.

\ref{Last_2}$\Leftrightarrow$\ref{Last_2'} It is clear.

\ref{Last_2'}$\Rightarrow$\ref{Last_3} We show that if \ref{Last_2'} holds and \ref{Last_3} fails, we
arrive to a contradiction. 
Assume that \ref{Last_3} fails for some $H$ and let $S$ denote
the saturated closure of $H^{\perp}\cup H^{\perp\perp}$. So, there is $v_0 \in E^0\setminus S$. If $v_0$
were a sink, then $v_0 \not\in H^{\perp}$ would imply that $v_0\in  H^{\perp\perp}\subseteq S$. As $v_0 \not\in S$, $v_0$
necessarily emits some edges. Their ranges are not all in $S$ because $v_0 \not\in S$
and $S$ is saturated, so at least one of these edges, say $e_1$, has its range outside
of $S$. If $r(e_1) = v_0$, $c = e_1$ is a cycle. If $v_1 = r(e_1) \not= v_0$, one can use the same
argument as the one used to show that $v_0$ emits edges to conclude that $v_1$
emits edges (if $v_1$ were a sink, then $v_1 \not\in  H^{\perp}$ implies that $v_1 \in H^{\perp\perp}$ and that
would contradict $v_1 \not\in S$). So, there is $e_2$ which $v_1$ emits. If $r(e_2) = v_0$, then
$e_1e_2$ is a closed path with all of its vertices outside of $S$, so there is also a
cycle $c$ which contains $v_0$ with all its vertices outside of $S$. If $v_2 = r(e_2) \not= v_0$,
we continue the process. As $E^0$ is finite, this process eventually ends and we
arrive to a cycle $c$ which contains $v_0$ and having all its vertices outside of $S$. The cycle $c$ has exits since otherwise $v_0 \not\in  H^{\perp}$ would imply that all vertices of $c$ are in $H^{\perp\perp}\subseteq S$. By the assumption that \ref{Last_2} holds, the ranges of these exits either all connect to $H$ or all connect to $H^{tc}(=H^{\perp})$. In the first case, the vertices of $c$ would be in $H^{\perp\perp}$. In the second case, the vertices of $c$ would be in $H^{\perp}$.
In each case, the vertices of $c$ end up being in $S$ which is a contradiction.

\ref{Last_3}$\Rightarrow$\ref{Last_2'} We show that if \ref{Last_3} holds and \ref{Last_2'} fails, then we arrive to a contradiction. If \ref{Last_2'} fails for some $H$, let $c$ be a cycle whose existence the failure of \ref{Last_2'} guarantees. As $c$ emits paths to $H$, the vertices
of $c$ are not in $H^{\perp}$. As $c$ emits paths to $H^{\perp}$, the vertices of $c$ are not in $H^{\perp\perp}$.
Thus, the vertices of $c$ are outside of $H^{\perp}\cup H^{\perp\perp}$. As \ref{Last_3} holds, the vertices of $c$ are in the saturated closure of $H^{\perp}\cup H^{\perp\perp}$ which implies that any path from $v_0 = s(c)$ connects to $H^{\perp}\cup H^{\perp\perp}$. Thus, for $v_0$ there is $n$ such that
 $v_0\in (H^{\perp}\cup H^{\perp\perp})_n$, where 
 \begin{align*}
 	(H^{\perp}\cup H^{\perp\perp})_0&=H^{\perp}\cup H^{\perp\perp}, \text{~and~}\\
 	(H^{\perp}\cup H^{\perp\perp})_n&=(H^{\perp}\cup H^{\perp\perp})_{n-1}\cup \{v\in \textrm{Reg}(E)\mid \{r(e)\mid s(e)=v\}\subseteq  (H^{\perp}\cup H^{\perp\perp})_{n-1}\}
 \end{align*}
 This means that the range $v_1$ of the edge $e_0$ which $v_0$ emits in $c$ is in $(H^{\perp}\cup H^{\perp\perp})_{n-1}$. Repeating this argument for $v_1$, we arrive to $v_2$ in $c$ which is in $(H^{\perp}\cup H^{\perp\perp})_{n-2}$. Eventually, we arrive to $v_n$ in $c$ which is in $(H^{\perp}\cup H^{\perp\perp})_0=H^{\perp}\cup H^{\perp\perp}$. Since $H^{\perp}\cup H^{\perp\perp}$ is hereditary, this implies that
 every vertex of $c$ is in $H^{\perp}\cup H^{\perp\perp}$. This is a contradiction since $c$ was chosen
 with vertices outside of $H^{\perp}\cup H^{\perp\perp}$.

\ref{Last_3}$\Leftrightarrow$\ref{Last_3'} It is evident since $E$ is finite.
\end{proof}

\medskip

\noindent{\bf Funding:}
The work of the first-named author  was partially supported by a grant (No. 99033216) from Iran National Science Foundation (INSF).

\vskip4.0pc

\bibliographystyle{amsplain}

\begin{thebibliography}{99}

\bibitem{ma2}
 M. Ahmadi,  A.  Moussavi,  Quasi-Baer $\ast$-ring characterization of Leavitt path algebras, {\it Sib. Math. J.} to appear.




\bibitem{Birkenmeier3}
G. F. Birkenmeier, J. K. Park,  S. T. Rizvi, Extensions of Rings and Modules, {\it Birkh\"{a}user, New York}, 2013.

\bibitem{Brown2014}
J. Brown, L. O. Clark, C. Farthing, A. Sims, Simplicity of algebras associated to etale groupoids,  {\it Semigroup Forum} \textbf{88}(2) (2014) 433--452.

\bibitem{3}  J. H. Brown, A. H. Fuller, D. R. Pitts, S. A. Reznikoff, Regular ideals of graph algebras, {\it Rocky Mountain J.
Math.} \textbf{52} (2022) 43-48. 

\bibitem{4} C. Gil Canto, D. Martın Barquero, C. Martın Gonzales, Invariants ideals in Leavitt path algebras, {\it Publ. Mat.}
\textbf{66} (2022) 541-569.

\bibitem{Clark2019-2}
L. O. Clark, C. Edie-Michell, A. An Huef, A. Sims,  Ideals of Steinberg algebras of strongly effective
groupoids, with applications to Leavitt path algebras, {\it Trans. Amer. Math. Soc.} \textbf{371}(8) (2019) 5461--5486.

\bibitem{Clark2018}
L. O. Clark, R. Exel, E. Pardo, A generalized uniqueness theorem and the graded ideal structure of Steinberg algebras, {Forum Math.} \textbf{30}(3) (2018)  533--552.

\bibitem{Clark2014}
L. O. Clark, C. Farthing, A. Sims,  M. Tomforde, A groupoid generalisation of Leavitt path algebras,
{\it Semigroup Forum} \textit{89}(3) (2014)  501--517.

\bibitem{Clark2019} L. O. Clark, D. Martin Barquero, C. Martin Gonzalez,  M. Siles Molina, Using the Steinberg algebra model to determine the center of any Leavitt path algebra, {\it Israel J. Math.} \textbf{230} (2019) 23--44.

\bibitem{Clark2017}
L. O. Clark, D. Martin Barquero, C. Martin Gonzalez, M. Siles Molina, Using Steinberg
algebras to study decomposability of Leavitt path algebras, {\it Forum Math.}
\textbf{29} (2017) 1311--1324.

\bibitem{Clark2015}
L. O. Clark and A. Sims, Equivalent groupoids have Morita equivalent Steinberg algebras, {\it J. Pure Appl. Algebra} \textbf{219} (2015) 2062--2075.

\bibitem{Exel2008}
R. Exel, Inverse semigroups and combinatorial C*-algebras, {\it Bull. Braz. Math. Soc. (N.S.)} \textbf{39} (2008) 191--313.



\bibitem{Kumijan1997}
A. Kumjian, D. Pask, I. Raeburn, J. Renault, Graphs, groupoids, and Cuntz-Krieger
algebras, {\it J. Funct. Anal.} \textbf{144} (1997) 505--541.

\bibitem{Paterson2002}
 A. L. T. Paterson, Graph inverse semigroups, groupoids and their C*-algebras, {\it J. Oper. Theory} \textbf{48} (2002) 645--662.


 
 \bibitem{Steinberg2010}
 B. Steinberg, A groupoid approach to discrete inverse semigroup algebras, {\it Adv. Math.} \textbf{223}(2) (2010)  689--727.
 
 \bibitem{dozd}
 L. Vas, Graph characterization of the annihilator ideals of Leavitt path algebras, {\it Bull. Aust. Math. Soc.} (2024) 1-10.
 
\bibitem{Webster2014}
S. B. G. Webster, The path space of a directed graph, {\it Proc. Am. Math. Soc.} \textbf{142} (2014) 213--225.
\end{thebibliography}

\end{document}